\documentclass{amsart}
\usepackage{bbm}
\usepackage{bbding}
\usepackage[mathcal]{euscript}
\usepackage{graphicx}
\usepackage{amsthm}
\usepackage{calc}
\usepackage{mathrsfs,dsfont}
\usepackage{CJK,fancyhdr,amscd}
\usepackage{anysize}
\usepackage{amsmath,amssymb,amsfonts}
\usepackage{epsfig,enumerate}
\usepackage{latexsym}
\usepackage{indentfirst, latexsym}
\usepackage{graphics}
\usepackage[all,poly,knot]{xy}
\usepackage[colorlinks,linkcolor=blue,anchorcolor=blue,citecolor=blue,pagebackref]{hyperref}
\usepackage{geometry}
\numberwithin{equation}{section}
\newtheorem{theorem} {Theorem} [section]
\newtheorem{proposition}[theorem]{Proposition}
\newtheorem{corollary}  [theorem]     {Corollary}
\newtheorem{lemma}  [theorem]     {Lemma}

\newtheorem{conjecture}  [theorem]     {Conjecture}

\theoremstyle{definition}

\renewcommand*\backref[1]{}
\renewcommand*\backrefalt[4]{ \ifcase #1 \or (cited on page #2) \else (cited on pages #2) \fi}

\begin{document}

\title{Hermitian Connections with Parallel Torsion and the Fino--Vezzoni Conjecture}

\author{Shuwen Chen}
\address{Shuwen Chen. School of Mathematical Sciences, Chongqing Normal University, Chongqing 401331, China}
\email{{3153017458@qq.com}}\thanks{Zheng is the corresponding author. He is partially supported by National Natural Science Foundations of China with the grant No. 12471039 and  12141101, and is supported by the 111 Project D21024.}

\author{Fangyang Zheng}
\address{Fangyang Zheng. School of Mathematical Sciences, Chongqing Normal University, Chongqing 401331, China}
\email{20190045@cqnu.edu.cn; franciszheng@yahoo.com} \thanks{}

\subjclass[2020]{53C55 (Primary), 53C05 (Secondary)}
\keywords{Hermitian manifolds, balanced metrics, pluriclosed metrics, Hermitian connections, parallel torsion, Bismut connection, Fino--Vezzoni conjecture}

\begin{abstract}
We prove that the Fino--Vezzoni conjecture holds on every compact complex manifold carrying a Hermitian connection with parallel torsion. More precisely, if a compact connected complex manifold carries a Hermitian metric $g$ and a Hermitian connection $D$ with $DT^D=0$, then the coexistence of a balanced metric and a pluriclosed metric forces the existence of a $D$-parallel K\"ahler metric. The proof combines a holonomy symmetrization onto $D$-parallel forms with a finite-dimensional volume-maximization argument. In particular, the conjecture holds on every compact Bismut torsion-parallel manifold. In the non-balanced case, the obstruction of Zhao--Zheng yields the stronger conclusion that balanced and pluriclosed metrics cannot coexist.
\end{abstract}

\maketitle

\markleft{Shuwen Chen and Fangyang Zheng}
\markright{Hermitian connections with parallel torsion and the Fino--Vezzoni conjecture}

\tableofcontents

\section{Introduction}

An important problem in non-K\"ahler geometry is to understand whether two special Hermitian structures of quite different nature can occur on the same compact complex manifold. Let $(M^n,g)$ be a Hermitian manifold of complex dimension $n$, and let $\omega$ be the fundamental form of $g$. Following the notation used in \cite{ZhaoZBTP}, Gauduchon's torsion $1$-form $\eta$ is the global $(1,0)$-form determined by $\partial\omega^{n-1}=-\eta\wedge\omega^{n-1}$. The metric $g$ is called {\em balanced} when $\eta=0$, or equivalently, when $d\omega^{n-1}=0$. It is called {\em pluriclosed}, also known as SKT, when $\partial\overline{\partial}\omega=0$. Both conditions are natural weakenings of the K\"ahler condition, but neither one implies the other in general.

Fino and Vezzoni proposed the following conjecture; see \cite{FV2,FV16}.

\begin{conjecture}[{\bf Fino--Vezzoni}]\label{conj:FV}
Let $M^n$ be a compact complex manifold. If $M$ admits a balanced Hermitian metric and a pluriclosed Hermitian metric, then $M$ admits a K\"ahler metric.
\end{conjecture}

The two metrics in Conjecture \ref{conj:FV} need not be the same. This distinction is essential. Indeed, if a single Hermitian metric is both balanced and pluriclosed, then it is K\"ahler by a standard result of Alexandrov--Ivanov \cite[Remark~1]{AI}; see also Lemma \ref{lem:balanced-pluriclosed-kahler} below. Also, the interesting dimensions start from $n=3$, since in complex dimension two the balanced condition already reads $d\omega=0$.

A number of special cases of Conjecture \ref{conj:FV} are known. Chiose \cite{Chiose} proved the conjecture for manifolds in Fujiki class $\mathcal C$. In fact, he showed the stronger statement that a manifold in class $\mathcal C$ carrying a pluriclosed metric is K\"ahler. Verbitsky \cite{Verbitsky} verified the conjecture for twistor spaces by showing that a twistor space admitting a pluriclosed metric must be K\"ahler. Popovici \cite{PopStar} obtained an affirmative answer under additional assumptions involving pluriclosed star split metrics. Fu-Li-Yau \cite{FuLiYau} proved that the conjecture holds for a special type of non-K\"ahler Calabi-Yau threefolds. Also, Fei \cite{Fei} confirmed the conjecture for generalized Calabi-Gray manifolds. On the other hand, Otiman \cite{Otiman} proved that Oeljeklaus--Toma manifolds admit no balanced metrics, while Angella--Otiman \cite{AngellaOtiman} established analogous nonexistence results for compact Vaisman manifolds. Thus the Fino--Vezzoni conjecture also holds on these classes.

There has also been substantial progress for compact quotients of Lie groups with invariant complex structures. The original work of Fino--Vezzoni verified the conjecture for nilmanifolds of dimensions six and eight, as well as for six-dimensional solvmanifolds with holomorphically trivial canonical bundle \cite{FV2}. They subsequently extended the result to complex nilmanifolds in arbitrary dimension \cite{FV16}. The relevant structure theory was later completed by Arroyo--Nicolini \cite{ArroyoNicolini}, who proved that every nilmanifold admitting an invariant SKT structure is either a torus or two-step nilpotent. Semisimple cases were studied in \cite{FGV,Podesta,GiustiPodesta}. More recently, Kwong \cite{KwongSemi} showed that compact quotients of real semisimple Lie groups endowed with regular complex structures cannot simultaneously admit a balanced metric and a pluriclosed metric. Fino--Paradiso established the conjecture for almost abelian solvmanifolds and obtained further results for almost nilpotent solvmanifolds \cite{FinoParadisoAA,FinoParadisoAN,FinoParadisoSix}. Freibert--Swann \cite{FreibertSwann} verified the conjecture for compact $2$-step solvmanifolds whose invariant complex structure is of pure type, as well as in real dimension six; Fusi--Gentili have recently removed these restrictions for unimodular $2$-step solvable Lie algebras \cite{FusiGentili}. For Lie algebras with a codimension-two abelian ideal, Li--Zheng handled the $J$-invariant case \cite{LiZhengFV}, and Cao--Zheng subsequently removed the $J$-invariance assumption \cite{CaoZheng}. Recent developments also include a pluriclosed-flow result of Fino--Vezzoni \cite{FinoVezzoniFlow} and Kwong's proof for compact discrete quotients of complex homogeneous spaces with compact isotropy \cite{Kwong}. We refer to \cite{ZhengSurvey} for a recent survey of the conjecture and related questions.

We now turn to Bismut torsion-parallel manifolds. The Bismut connection $\nabla^b$ is the unique Hermitian connection with totally skew-symmetric torsion \cite{Bismut}. A Hermitian metric is called {\em Bismut torsion parallel}, or {\em BTP}, if $\nabla^bT^b=0$. The Bismut K\"ahler-like case already provides a compatibility obstruction \cite{ZhaoZ19Str}. Zhao--Zheng proved the Fino--Vezzoni conjecture for compact non-balanced BTP threefolds \cite{ZhaoZBTP}. Subsequently, by studying compact balanced BTP threefolds, they established the conjecture in the balanced case as well, and hence for all compact BTP threefolds \cite{ZhaoZ25}. More recently, Chen--Zheng extended the balanced case to any complex dimension, proving the conjecture for compact balanced BTP manifolds \cite{ChenZhengFV}.

The result of this paper is naturally formulated at the level of a general Hermitian connection with parallel torsion. Here a Hermitian connection means a connection compatible with both the Hermitian metric and the {almost} complex structure.

\begin{theorem}\label{thm:parallel-main}
Let $(M^n,g)$ be a compact connected Hermitian manifold with a Hermitian connection $D$ whose torsion is $D$-parallel (namely, $DT^D=0$). If $M$ admits both a balanced Hermitian metric and a pluriclosed Hermitian metric, then $M$ admits a K\"ahler metric. Moreover, the K\"ahler metric can be chosen to be $D$-parallel.
\end{theorem}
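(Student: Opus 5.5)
The plan is to transfer both the balanced and the pluriclosed conditions onto the finite-dimensional space of $D$-parallel forms, and then to find a Kähler form inside that space by a volume-maximization argument.

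\textbf{Structure of parallel forms.} Because $D$ is Hermitian, $DJ=0$ and $Dg=0$. The $D$-parallel forms are therefore determined by their values at one point $p$, and these values are exactly the elements of $\Lambda^\bullet T_p^*M$ fixed by the holonomy group $\mathrm{Hol}_p(D)\subset U(n)$. Since $DT^D=0$, the exterior derivative of a $D$-parallel form can be written algebraically in terms of the torsion: $d\alpha=\sum_i e^i\wedge D_{e_i}\alpha+(\text{contraction of }T^D\text{ into }\alpha)$, and the first term vanishes. Hence $d$ preserves $D$-parallel forms, and $d$, $\partial$, $\bar\partial$ act on this space as constant linear maps. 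Write $\mathcal P^{p,q}$ for the space of $D$-parallel $(p,q)$-forms.

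\textbf{Symmetrization.} Let $K$ be the closure of $\mathrm{Hol}(D)$. Pointwise averaging over $K$ does not directly give a global map, so I would instead use the following operator. Let $F\to M$ be the holonomy reduction of the unitary frame bundle; it is a principal $K$-bundle carrying a connection. For a form $\alpha$ on $M$, define $\pi(\alpha)$ to be the parallel form whose value at $p$ is $\int_M \mathrm{Av}_K(P_{\gamma_x}\alpha_x)\,dV(x)/\mathrm{vol}$. Here $P_{\gamma_x}$ is parallel transport along any path from $x$ to $p$; after averaging over $K$ the result is independent of the path chosen. Equivalently, $\pi$ is $L^2$-orthogonal projection onto the parallel forms, computed fibrewise via $F$.

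The key property I need is that $\pi$ commutes with $d$, or at least that $\pi(d\beta)=d\,\pi'(\beta)$ for a suitable $\pi'$. I expect this is the main obstacle. My first attempt would use $L^2$-duality: for parallel $\phi$, $\langle d\beta,\phi\rangle=\langle\beta,d^*\phi\rangle$. Since $d^*$ of a parallel form is again parallel (by the same torsion-algebraic formula together with $D$-parallelism of the volume form), $\pi$ intertwines $d$ with the finite-dimensional operator $(d^*|_{\mathcal P})^*$, and the latter equals $d|_{\mathcal P}$ with respect to the induced inner product. The same argument applies to $\partial$ and $\bar\partial$ separately, because $\pi$ preserves bidegree. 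Positivity is preserved as well: $\pi$ sends positive $(k,k)$-forms to positive parallel forms, since parallel transport is unitary and averaging is a convex combination. Moreover, strict positivity survives because $\pi$ is an average of strictly positive forms.

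\textbf{Finite-dimensional argument.} Let $\omega_b$ be the balanced metric and $\omega_p$ the pluriclosed metric. Set $\Omega=\pi(\omega_b^{n-1})$; it is a strictly positive, $d$-closed, parallel $(n-1,n-1)$-form, and hence $\Omega=\omega_0^{n-1}$ for a unique parallel positive $(1,1)$-form $\omega_0$, which is therefore a parallel balanced metric. Set $\sigma=\pi(\omega_p)$; it is a parallel, strictly positive, $\partial\bar\partial$-closed $(1,1)$-form. On the convex cone $\mathcal C$ of parallel positive $(1,1)$-forms, consider the functional $\Phi(\omega)=\int\omega^n$ restricted to $\{\omega\in\mathcal C:\int\omega\wedge\omega_0^{n-1}=1\}$, modulo the $\partial\bar\partial$-exact directions. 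Every quantity here is algebraic, since $\partial\bar\partial$ of parallel forms is a constant linear map. By concavity of $\omega\mapsto(\omega^n)^{1/n}$, together with compactness of the slice, a maximizer exists in the interior. The Euler--Lagrange equation then states that $\omega^{n-1}$ is orthogonal to $\ker(\partial\bar\partial)\cap\mathcal P^{1,1}$ modulo the constraint.

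Next I would pair this with $\sigma$ and with the variations $\omega+t\,\partial\bar\partial$-closed directions. Using Stokes' theorem on $M$ and $d\omega_0^{n-1}=0$, the goal is to show that the maximizer $\omega$ satisfies $\int\partial\omega\wedge\bar\partial\omega\wedge\omega^{n-2}=0$, and hence that $d\omega=0$. Alternatively, I would take $\omega=\sigma+$ a correction and use the classical argument of Alexandrov--Ivanov (Lemma \ref{lem:balanced-pluriclosed-kahler}) on a metric that is simultaneously balanced and pluriclosed within $\mathcal P$. A pairing $\int\partial\bar\partial\sigma\wedge\cdots$ combined with $\int\sigma\wedge\partial\bar\partial(\cdot)$ should force the torsion of the maximizer to vanish. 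The output is a $D$-parallel Kähler form.
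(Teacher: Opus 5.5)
\textbf{The final step has a genuine gap.}

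Your first half is sound and is essentially the paper's. The operator $d$ preserves $D$-parallel forms. Your holonomy average is the $L^2$-orthogonal projection onto those forms. Your duality argument ($d^*=\pm *d*$ preserves parallel forms because $*_g$ is $D$-parallel) is an equivalent way to obtain $\pi d=d\pi$. So you correctly get a parallel balanced $\omega_0$ and a parallel pluriclosed $\sigma$.

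The gap is the last step, which is where the theorem is actually decided: you never produce a parallel metric that is both balanced and pluriclosed.
\begin{itemize}
\item The variational problem is not pinned down. The phrase ``modulo the $\partial\overline\partial$-exact directions'' has no clear meaning here. If you maximize $\int_M\omega^n$ over all of $\mathcal C$ subject to $\int_M\omega\wedge\omega_0^{n-1}=1$, AM--GM on the eigenvalues makes the maximizer a multiple of $\omega_0$, which need not be pluriclosed.
\item Your stated target $\int_M\partial\omega\wedge\overline\partial\omega\wedge\omega^{n-2}=0$ is vacuous. The integrand has bidegree $(n+1,n+1)$ and vanishes identically.
\item The closing sentences (``should force the torsion of the maximizer to vanish'') give no mechanism linking maximality to $d\omega^{n-1}=0$.
\end{itemize}

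\textbf{The missing idea.} Vary in the directions $\mathcal A_D=\{(d\beta)^{1,1}:\beta\ \text{a real $D$-parallel $1$-form}\}$. Each $(d\beta)^{1,1}=\overline\partial\beta^{1,0}+\partial\beta^{0,1}$ is $\partial\overline\partial$-closed. By Stokes,
\[
\left.\frac{d}{dt}\right|_{t=0}\int_M\bigl(\omega+t(d\beta)^{1,1}\bigr)^n=n\int_M d\beta\wedge\omega^{n-1}=n\int_M\beta\wedge d\omega^{n-1}.
\]
Here $d\omega^{n-1}$ is parallel, and the integration pairing between parallel $1$-forms and parallel $(2n-1)$-forms is nondegenerate (pair with $*_g$). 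Hence a parallel $\omega>0$ is critical along $\mathcal A_D$ if and only if it is balanced.

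Now apply this to $\omega_0$. The $\omega_0$-trace of every $\alpha\in\mathcal A_D$ is a constant with zero integral, so it vanishes. Therefore the set $\{\omega\in\sigma+\mathcal A_D:\omega\geq0\}$ has fixed $\omega_0$-trace and is compact. The volume is maximized there at a positive definite point, because boundary points have zero determinant. That maximizer is critical along $\mathcal A_D$, hence balanced. It is pluriclosed because $\sigma$ is. Alexandrov--Ivanov then gives $d\omega=0$.

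\textbf{Rescuing your own setup.} Maximize over $\ker(\partial\overline\partial)\cap\overline{\mathcal C}$ with your normalization. Then test the Euler--Lagrange equation against $\alpha\in\mathcal A_D\subset\ker(\partial\overline\partial)$, which satisfy $\int_M\alpha\wedge\omega_0^{n-1}=0$. This yields exactly $\int_M\beta\wedge d\omega^{n-1}=0$ for all parallel $\beta$, so the maximizer is balanced.
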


Taking $D$ to be the Bismut connection gives the following consequence.

\begin{corollary}\label{thm:main}
The Fino--Vezzoni conjecture holds on every compact BTP manifold. Moreover, if $(M^n,g)$ is a compact non-balanced BTP manifold, then the underlying complex manifold cannot admit a balanced Hermitian metric and a pluriclosed Hermitian metric simultaneously.
\end{corollary}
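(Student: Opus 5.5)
The corollary should follow from Theorem \ref{thm:parallel-main} once we specialize to the Bismut connection and add the Zhao--Zheng obstruction in the non-balanced case.

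For the first assertion, let $(M^n,g)$ be a compact BTP manifold. Then $D=\nabla^b$ is a Hermitian connection, compatible with $g$ and with the (integrable) complex structure, and its torsion satisfies $\nabla^bT^b=0$. So the pair $(g,\nabla^b)$ meets the hypotheses of Theorem \ref{thm:parallel-main}. If $M$ also carries a balanced metric and a pluriclosed metric, possibly distinct from $g$ and from each other, the theorem gives a K\"ahler metric on $M$, which may even be taken $\nabla^b$-parallel. This is exactly Conjecture \ref{conj:FV} for $M$. We may assume $M$ is connected, since otherwise we apply the argument to each component, each of which inherits the BTP structure and both metrics, and then take the disjoint union of the resulting K\"ahler metrics.

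For the second assertion, assume in addition that $g$ is not balanced, i.e.\ its Gauduchon torsion form satisfies $\eta\neq 0$, and suppose for contradiction that $M$ carries a balanced metric and a pluriclosed metric. By the first part, $M$ then admits a K\"ahler metric $\omega_0$ that is $\nabla^b$-parallel. I would contradict this with the Zhao--Zheng structure theory for non-balanced BTP manifolds \cite{ZhaoZBTP}. That theory says the dual vector field of $\eta$ is $\nabla^b$-parallel with nonzero constant length, and that on a compact manifold this produces a holomorphic structure, a Vaisman-type or $d$-exact contribution, incompatible with any K\"ahler class. A concrete route is to show that, for the $\nabla^b$-parallel $(1,0)$-form $\eta$, some positive quantity such as $\int_M \sqrt{-1}\,\eta\wedge\overline\eta\wedge\omega_0^{n-1}$ is forced to vanish by $d$-exactness or closedness, contradicting $\eta\neq0$. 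Alternatively, and more simply, I would cite directly the obstruction of \cite{ZhaoZBTP}, which the abstract says yields non-coexistence, so that no K\"ahler metric and hence no such pair of metrics can exist.

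The main obstacle is this last step: identifying precisely which form of the Zhao--Zheng obstruction applies in all dimensions and not only in dimension three. If \cite{ZhaoZBTP} gives it only for threefolds, I would reprove it with the parallel-form mechanism above. The key point is that $\eta$ is $\nabla^b$-parallel and that its differential $d\eta$ is expressible through the parallel torsion, which pairs to zero against a $\nabla^b$-parallel K\"ahler form by Stokes.
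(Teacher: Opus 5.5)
Your proposal is correct and is essentially the paper's proof: you take $D=\nabla^b$ in Theorem~\ref{thm:parallel-main} for the first assertion, and in the non-balanced case you contradict the resulting K\"ahler metric with the Zhao--Zheng obstruction (Lemma~\ref{lem:ZZ-obstruction}, i.e.\ \cite[Corollary~1.11]{ZhaoZBTP}), which is stated for compact non-balanced BTP manifolds in every dimension $n$, not only for threefolds. So the direct citation suffices, and your fallback Stokes argument is not needed; as sketched, it would in any case require a pointwise sign condition on $d\eta$ that you did not supply.
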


The proof uses two features of a Hermitian connection with $DT^D=0$. First, exterior differentiation preserves $D$-parallel forms. Second, unitary holonomy provides an averaging procedure that preserves positivity. These observations reduce the problem to a finite-dimensional affine space of parallel $(1,1)$-forms, where the balanced condition is detected by the first variation of the volume. The determinant-maximization step is related to the homogeneous-space argument in \cite{Kwong}, but no transitive group action is required here.

\section{Preliminaries}\label{PRE}

Let $(M^n,J,g)$ be a Hermitian manifold and let $\omega$ be its fundamental form, $\omega(X,Y)=g(JX,Y)$. A connection $D$ on $TM$ is called {\em Hermitian} if it is compatible with both $g$ and $J$, namely
$$
Dg=0,\qquad DJ=0.
$$
Its torsion tensor is
$$
T^D(X,Y)=D_XY-D_YX-[X,Y].
$$
We write $dV_g$ for the Riemannian volume form determined by $g$ and the complex orientation. The Hodge star $*_g$ is characterized, for real $k$-forms $\alpha$ and $\beta$, by
$$
\alpha\wedge *_g\beta=\langle\alpha,\beta\rangle_g\,dV_g,
$$
where $\langle\ ,\ \rangle_g$ denotes the inner product on differential forms induced by $g$. We extend $*_g$ complex linearly to complex-valued forms. Since a Hermitian connection preserves $g$ and $J$, its parallel transports are unitary and preserve the complex orientation. Hence $dV_g$ and $*_g$ are $D$-parallel; in particular,
$$
D_X(*_g\alpha)=*_g(D_X\alpha).
$$

The Bismut connection $\nabla^b$ is the unique Hermitian connection whose torsion is totally skew-symmetric \cite{Bismut}. We denote its torsion by $T^b$. Following Zhao--Zheng \cite{ZhaoZBTP}, a Hermitian metric is called {\em Bismut torsion parallel}, or {\em BTP}, if
$$
\nabla^bT^b=0.
$$

We shall use the following standard result of Alexandrov--Ivanov \cite[Remark~1]{AI}.

\begin{lemma}[Alexandrov--Ivanov \cite{AI}]\label{lem:balanced-pluriclosed-kahler}
Let $(M^n,\omega)$ be a Hermitian manifold. If $d\omega^{n-1}=0$ and $\partial\overline\partial\omega=0$, then $d\omega=0$.
\end{lemma}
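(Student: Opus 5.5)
The plan is a pointwise argument: I will show that the two hypotheses force the top-degree form $\partial\omega\wedge\overline\partial\omega\wedge\omega^{n-3}$ to vanish, and that, for a balanced metric, this form is a nonzero constant multiple of $|\partial\omega|^2_g\,dV_g$. Compactness is not needed. First I dispose of small dimensions. For $n=1$ there is nothing to prove. For $n=2$ the hypothesis $d\omega^{n-1}=d\omega=0$ is already the conclusion. So assume $n\ge 3$. Since $\omega$ is real, $d\omega=\partial\omega+\overline{\partial\omega}$, and it suffices to prove $\partial\omega=0$.

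\emph{Step 1 (primitivity).} The $(n,n-1)$-part of $d\omega^{n-1}=0$ gives $(n-1)\,\partial\omega\wedge\omega^{n-2}=0$. For a $3$-form $\alpha$, the condition $\alpha\wedge\omega^{n-2}=0$, i.e.\ $L^{n-3+1}\alpha=0$, is exactly primitivity by the Lefschetz decomposition. Hence $\partial\omega$ is a primitive $(2,1)$-form at every point.

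\emph{Step 2 (the wedge vanishes).} Apply $\partial$ to the identity $\overline\partial\omega\wedge\omega^{n-2}=0$, which is the conjugate of Step 1. This gives
$$
\partial\overline\partial\omega\wedge\omega^{n-2}-(n-2)\,\overline\partial\omega\wedge\partial\omega\wedge\omega^{n-3}=0,
$$
with the sign coming from moving $\partial$ past the $3$-form $\overline\partial\omega$. The signs need only be tracked loosely, since only the vanishing matters. Using $\partial\overline\partial\omega=0$ and $n\ge 3$, we get
$$
\partial\omega\wedge\overline{\partial\omega}\wedge\omega^{n-3}=0
$$
pointwise on $M$.

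\emph{Step 3 (Hodge--Riemann).} For a primitive $(p,q)$-form $\alpha$ with $p+q=k$, the standard formula gives
$$
*_g\overline\alpha=(-1)^{k(k+1)/2}\,i^{\,p-q}\,\frac{\omega^{n-k}}{(n-k)!}\wedge\overline\alpha,
$$
up to the paper's convention for conjugation. Applying it with $\alpha=\partial\omega$ and $(p,q)=(2,1)$ yields
$$
\partial\omega\wedge\overline{\partial\omega}\wedge\omega^{n-3}=c_n\,|\partial\omega|^2_g\,dV_g
$$
for a nonzero constant $c_n$. Combined with Step 2, this gives $\partial\omega=0$, hence $d\omega=0$.

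The only delicate point is Step 3: getting the Hodge--Riemann normalization right, so that the constant $c_n$ is genuinely nonzero and the pairing is definite on primitive $(2,1)$-forms. If one prefers to avoid the general formula, I would instead verify it at a point in a unitary coframe. One writes a primitive $(2,1)$-form in that coframe and checks directly that the wedge with $\omega^{n-3}$ is a fixed multiple of the sum of squared moduli of its coefficients, using that primitivity kills the trace parts.
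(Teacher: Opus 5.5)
The paper gives no proof of this lemma; it quotes \cite[Remark~1]{AI}. Your argument is correct and complete as a self-contained pointwise proof.

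\begin{enumerate}
\item Step~1 uses the standard fact that, for $n\ge3$, a $3$-form $\alpha$ is primitive iff $\alpha\wedge\omega^{n-2}=0$.
\item Step~2 is the identity $\partial\overline\partial\omega\wedge\omega^{n-2}=(n-2)\,\overline\partial\omega\wedge\partial\omega\wedge\omega^{n-3}$, which holds for every balanced metric.
\item Step~3 is the Hodge--Riemann bilinear relation: $-i\,\alpha\wedge\overline\alpha\wedge\omega^{n-3}$ is a positive multiple of $|\alpha|_g^2\,dV_g$ for every primitive $(2,1)$-form $\alpha$. The exact unit in your Weil formula depends on conventions, but only its nonvanishing is used.
\end{enumerate}

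The usual proof behind \cite{AI} is organized differently, and I am recalling its structure rather than its exact constants. It works with the totally skew torsion $3$-form of $\nabla^b$, which is $d^c\omega$ up to sign and normalization, so pluriclosedness means this $3$-form is closed. It then uses a general pointwise identity, valid for every Hermitian metric, expressing $i\partial\overline\partial\omega\wedge\omega^{n-2}$ through $|T^b|^2$, the Lee form and its codifferential. For a balanced metric the Lee-form terms vanish, leaving a nonzero multiple of $|T^b|^2$.

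Your Steps~2--3 are exactly the balanced specialization of that identity. Primitivity of $\partial\omega$ plays the role of the vanishing Lee form, since the Lee form is essentially the trace $\Lambda(\partial\omega)$. Hodge--Riemann replaces the explicit torsion computation. This makes your route shorter, and it avoids all connection-theoretic bookkeeping.

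What the general identity buys in return is control in the non-balanced case. There the trace part of $\partial\omega$ (the form $\eta$) also contributes, and integrating over a compact manifold yields further vanishing results. For the lemma as stated, which has no compactness hypothesis, your pointwise argument is all that is needed.
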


Recall that a Hermitian metric is called {\em strongly Gauduchon} if there exists an $(n,n-2)$-form $\Omega$ satisfying $\partial\omega^{n-1}=\overline\partial\Omega$; see \cite{Pop}. We shall also use the following consequence of the BTP structure theory \cite[Corollary~1.11]{ZhaoZBTP}.

\begin{lemma}[Zhao--Zheng \cite{ZhaoZBTP}]\label{lem:ZZ-obstruction}
Let $(M^n,g)$ be a compact non-balanced BTP manifold. Then $g$ is not strongly Gauduchon and $H^{0,1}_{\overline\partial}(M)\neq0$. Moreover, $M$ does not satisfy the $\partial\overline\partial$-lemma. In particular, $M$ does not admit any K\"ahler metric.
\end{lemma}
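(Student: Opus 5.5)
The plan is to extract everything from Gauduchon's torsion $1$-form $\eta$, which is nonzero because $g$ is not balanced. Everything reduces to two identities that I expect to follow from $\nabla^bT^b=0$:
$$
\partial\eta=0,\qquad \partial\overline\partial\,\omega^{n-1}=0 .
$$
The second says $g$ is Gauduchon. The first identity is the main obstacle. Since $\eta$ is obtained from $T^b$ by a trace, $\nabla^bT^b=0$ gives $\nabla^b\eta=0$, so $|\eta|$ is a positive constant. Write $d\eta$ via $\nabla^b$; because $\eta$ is parallel, only torsion terms survive: $d\eta(X,Y)=\eta(T^b(X,Y))$ up to sign. Its $(2,0)$-part is then an algebraic contraction of the $(2,0)$-torsion components with $\eta^\sharp$. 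To kill it, I would use the algebraic identities that $\nabla^bT^b=0$ imposes on $T^b$ through the Bianchi identity, in particular that $\eta^\sharp$ acts as a $\nabla^b$-parallel, torsion-compatible direction. This computation in a unitary frame is where the real work lies. Once $\partial\eta=0$ holds, the Gauduchon identity follows from $\partial\overline\partial\omega^{n-1}=-\partial(\overline\eta\wedge\omega^{n-1})$ together with the same parallelism and contraction identities.

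\emph{Not strongly Gauduchon.} Suppose $\partial\omega^{n-1}=\overline\partial\Omega$. From $\partial\eta=0$ we get $\overline\partial\,\overline\eta=0$. Stokes then gives
$$
\int_M\partial\omega^{n-1}\wedge\overline\eta=\int_M\overline\partial\Omega\wedge\overline\eta=\pm\int_M\Omega\wedge\overline\partial\,\overline\eta=0 .
$$
On the other hand, $\partial\omega^{n-1}\wedge\overline\eta=-\eta\wedge\overline\eta\wedge\omega^{n-1}$, which is a nonzero constant multiple of $|\eta|^2dV_g$. Its integral is nonzero, a contradiction.

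\emph{$H^{0,1}_{\overline\partial}(M)\neq0$.} The form $\overline\eta$ is $\overline\partial$-closed. Suppose $\overline\eta=\overline\partial f$. Using the Gauduchon identity,
$$
\int_M\partial\omega^{n-1}\wedge\overline\partial f=\pm\int_M f\,\overline\partial\partial\omega^{n-1}=0 .
$$
By the same computation as above this integral is nonzero, so $[\overline\eta]\neq0$.

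\emph{Failure of the $\partial\overline\partial$-lemma.} The form $\partial\omega^{n-1}$ is $\partial$-exact. It is also $d$-closed, since $\overline\partial\partial\omega^{n-1}=0$. If the $\partial\overline\partial$-lemma held, we would have $\partial\omega^{n-1}=\partial\overline\partial\beta=\overline\partial(-\partial\beta)$, so $g$ would be strongly Gauduchon, which we just excluded. Since compact K\"ahler manifolds satisfy the $\partial\overline\partial$-lemma, $M$ admits no K\"ahler metric.
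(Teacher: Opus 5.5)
Your deductions from the two identities $\overline\partial\,\overline\eta=0$ and $\partial\overline\partial\omega^{n-1}=0$ are correct. These are the standard duality arguments: pairing against $\int_M\partial\omega^{n-1}\wedge\overline\eta=-\int_M\eta\wedge\overline\eta\wedge\omega^{n-1}\neq0$, and then the $\partial\overline\partial$-lemma in the form $\ker\overline\partial\cap\operatorname{im}\partial=\operatorname{im}\partial\overline\partial$. The paper gives no proof of this lemma; it simply quotes Zhao--Zheng.

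The gap is that the two identities, which carry all of the BTP input, are not established. For $\partial\eta=0$ you correctly reduce, via $\nabla^b\eta=0$ and $d\eta(X,Y)=\eta(T^b(X,Y))$, to showing $\eta(T^b(X,Y))=0$ for $X,Y\in T^{1,0}M$. You then only announce that Bianchi-type identities should give this. Calling $\eta^\sharp$ a ``torsion-compatible direction'' just restates the claim. The Gauduchon identity also does not follow from $\partial\eta=0$. Expanding gives $\partial\overline\partial\omega^{n-1}=-(\partial\overline\eta+\overline\eta\wedge\eta)\wedge\omega^{n-1}$, and its vanishing is a separate trace condition on the $(1,1)$-part of $d\overline\eta$.

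Both steps can be supplied in a few lines. Put $H(X,Y,Z)=g(T^b(X,Y),Z)$. This is a real $3$-form of type $(2,1)+(1,2)$, so $T^b(X,Y)\in T^{1,0}M$ whenever $X,Y\in T^{1,0}M$. Since $\nabla^bH=0$, the formula in the proof of Lemma \ref{lem:d-parallel} gives $dH=2\sigma_H$. Here $\sigma_H(X,Y,Z,W)$ is the cyclic sum over $(X,Y,Z)$ of $g(T^b(X,Y),T^b(Z,W))$.

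On the other hand, $H$ is a constant multiple of $\sqrt{-1}(\partial-\overline\partial)\omega$. So $dH$ is a multiple of $\sqrt{-1}\partial\overline\partial\omega$ and has pure type $(2,2)$. Hence $\sigma_H$ has no $(3,1)$-component.

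Now take $X,Y\in T^{1,0}M$ and, for a unitary frame, set $Z=e_k$, $W=\overline e_k$ and sum over $k$. The two terms $\sum_kg(T^b(Y,e_k),T^b(X,\overline e_k))$ and $\sum_kg(T^b(e_k,X),T^b(Y,\overline e_k))$ cancel: write them in components and interchange the two summation indices. The $(0,1)$-part of $\sum_kT^b(e_k,\overline e_k)$ is a nonzero multiple of the $g$-dual of $\eta$. This gives $\eta(T^b(X,Y))=0$, hence $\partial\eta=0$.

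For the Gauduchon property, $\partial\omega^{n-1}$ is $\nabla^b$-parallel by Lemma \ref{lem:d-parallel} together with $\nabla^bJ=0$. So $\overline\partial\partial\omega^{n-1}=d(\partial\omega^{n-1})$ is a parallel form of top degree, hence a constant multiple of $dV_g$. Its integral vanishes by Stokes, so it is zero. With these two steps filled in, your argument proves the lemma.
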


\section{Parallel-torsion symmetrization}\label{SYM}

Every Hermitian metric on a complex curve is K\"ahler. We therefore assume $n\geq2$ in the nontrivial part of the argument.

Fix a compact connected Hermitian manifold $(M^n,g)$ and a Hermitian connection $D$ satisfying $DT^D=0$. For $0\leq k\leq2n$, set
$$
\mathcal E_D^k=\{\alpha\in\Omega^k(M):D\alpha=0\}.
$$
We first regard $\mathcal E_D^k$ as the space of real $D$-parallel $k$-forms and use the same notation for its complexification. Since a parallel form is determined by its value at one point, each $\mathcal E_D^k$ is finite-dimensional.

\begin{lemma}\label{lem:d-parallel}
If $DT^D=0$, then $d\mathcal E_D^k\subseteq\mathcal E_D^{k+1}$ for every $k$. Consequently, $(\mathcal E_D^\bullet,d)$ is a differential subcomplex of the de Rham complex.
\end{lemma}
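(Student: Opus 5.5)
The plan is to express the exterior derivative through the connection $D$ and its torsion, and then check that every ingredient of the resulting formula is $D$-parallel. For any torsion-bearing connection $D$ on $TM$ and any $k$-form $\alpha$, the standard identity is
\begin{equation*}
d\alpha(X_0,\dots,X_k)=\sum_{i=0}^k(-1)^i(D_{X_i}\alpha)(X_0,\dots,\widehat{X_i},\dots,X_k)+\sum_{i<j}(-1)^{i+j}\alpha\bigl(T^D(X_i,X_j),X_0,\dots,\widehat{X_i},\dots,\widehat{X_j},\dots,X_k\bigr).
\end{equation*}
I will first recall or derive this identity. It comes from the invariant formula for $d\alpha$ together with the substitution $[X_i,X_j]=D_{X_i}X_j-D_{X_j}X_i-T^D(X_i,X_j)$. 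The terms containing $D_{X_i}X_j$ then recombine with the derivative terms $X_i(\alpha(\dots))$ to produce the expressions $(D_{X_i}\alpha)(\dots)$.

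If $D\alpha=0$, the first sum vanishes. We are left with
\begin{equation*}
d\alpha=\pm\,\mathrm{Alt}\bigl(\iota_{T^D}\alpha\bigr),
\end{equation*}
which is, up to sign and normalization, the alternation of the tensor $(X,Y,Z_1,\dots)\mapsto \alpha(T^D(X,Y),Z_1,\dots)$. This tensor is built from $\alpha$ and $T^D$ by tensor product, contraction and antisymmetrization. Covariant differentiation $D$ is a derivation that commutes with contractions and with alternation. Since $D\alpha=0$ and $DT^D=0$, it follows that $D(d\alpha)=0$, that is, $d\alpha\in\mathcal E_D^{k+1}$.

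Because $d^2=0$ holds on all forms, $(\mathcal E_D^\bullet,d)$ is then a subcomplex of the de Rham complex. For the complexification, note that $D$ and $d$ are both real operators, so the inclusion extends complex-linearly. The only real work is the sign bookkeeping in the torsion formula, and even that does not matter: all the argument needs is that $d\alpha$ is some universal linear expression in $\alpha\otimes T^D$ whenever $D\alpha=0$. I would also note in passing that $\mathcal E_D^\bullet$ is closed under $\wedge$, since $D$ is a derivation, so it is in fact a differential graded subalgebra. This fact will be useful later in the paper.
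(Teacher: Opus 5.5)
Your proposal is correct and is essentially the paper's proof: you write $d\alpha$ in terms of $D\alpha$ and $T^D$, note that the $D\alpha$-terms vanish, and conclude that the remaining algebraic contraction of the parallel tensors $\alpha$ and $T^D$ is $D$-parallel. One small slip is that the substitution $[X_i,X_j]=D_{X_i}X_j-D_{X_j}X_i-T^D(X_i,X_j)$ you describe actually gives the sign $(-1)^{i+j+1}$ in the torsion sum, as in the paper; for $k=1$ this reads $d\alpha(X,Y)=(D_X\alpha)(Y)-(D_Y\alpha)(X)+\alpha(T^D(X,Y))$. As you note yourself, this sign has no effect on the argument.
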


\begin{proof}
Let $\alpha\in\mathcal E_D^k$. For vector fields $X_0,\ldots,X_k$,
\begin{align*}
d\alpha(X_0,\ldots,X_k)
={}&\sum_{i=0}^k(-1)^i(D_{X_i}\alpha)(X_0,\ldots,\widehat X_i,\ldots,X_k)\\
&+\sum_{0\leq i<j\leq k}(-1)^{i+j+1}\alpha\big(T^D(X_i,X_j),X_0,\ldots,\widehat X_i,\ldots,\widehat X_j,\ldots,X_k\big).
\end{align*}
Since $D\alpha=0$, the first sum vanishes, while the second is an algebraic contraction of $T^D$ with $\alpha$. The assumptions $DT^D=0$ and $D\alpha=0$ therefore imply $D(d\alpha)=0$. Hence $d\alpha\in\mathcal E_D^{k+1}$. Since $\mathcal E_D^k\subseteq\Omega^k(M)$ for every $k$ and $d^2=0$ on the de Rham complex, the restriction of $d$ to $\mathcal E_D^\bullet$ also satisfies $d^2=0$. Thus $(\mathcal E_D^\bullet,d)$ is a differential subcomplex of the de Rham complex.
\end{proof}

By Section \ref{PRE}, the Hodge star is $D$-parallel, and hence $*_g\mathcal E_D^k=\mathcal E_D^{2n-k}.$

\begin{lemma}\label{lem:integration-pairing}
For every $k$, the integration pairing
$$
\mathcal E_D^k\times\mathcal E_D^{2n-k}
\longrightarrow\mathbb R,\qquad
(\alpha,\beta)\longmapsto\int_M\alpha\wedge\beta
$$
is nondegenerate. Equivalently, it induces an isomorphism
$$
\mathcal E_D^k\longrightarrow(\mathcal E_D^{2n-k})^*,\qquad
\alpha\longmapsto\left(\beta\longmapsto\int_M\alpha\wedge\beta\right),
$$
where $(\mathcal E_D^{2n-k})^*$ denotes the dual vector space of $\mathcal E_D^{2n-k}$.
\end{lemma}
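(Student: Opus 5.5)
The plan is to use the Hodge star, which by Section \ref{PRE} maps $\mathcal E_D^{2n-k}$ onto $\mathcal E_D^{k}$ (more precisely $*_g\mathcal E_D^k=\mathcal E_D^{2n-k}$ and $*_g$ is invertible). Since both spaces are finite-dimensional and $*_g$ restricts to an isomorphism $\mathcal E_D^k\to\mathcal E_D^{2n-k}$, they have the same dimension. It therefore suffices to show that the linear map $\mathcal E_D^k\to(\mathcal E_D^{2n-k})^*$ is injective; surjectivity then follows from the dimension count. Equivalently, nondegeneracy in the first slot is enough.

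For injectivity, take $\alpha\in\mathcal E_D^k$ with $\int_M\alpha\wedge\beta=0$ for every $\beta\in\mathcal E_D^{2n-k}$. Test with $\beta=*_g\alpha$, which lies in $\mathcal E_D^{2n-k}$ because $D$ commutes with $*_g$. Then
$$
0=\int_M\alpha\wedge *_g\alpha=\int_M|\alpha|_g^2\,dV_g .
$$
The integrand is a nonnegative continuous function, so $\alpha\equiv0$. Compactness of $M$ guarantees the integral is finite and defined, and no connectedness is needed for this step. Hence the map is injective, and by equality of dimensions it is an isomorphism. This also gives nondegeneracy in the second slot by symmetry: swap the roles of $k$ and $2n-k$, noting $\alpha\wedge\beta=\pm\beta\wedge\alpha$.

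The only point needing care is that the pairing is real-valued on real forms and that $*_g$ really preserves parallelism. The latter is already recorded in Section \ref{PRE}: parallel transport of a Hermitian connection is unitary and orientation-preserving, so $dV_g$ and $*_g$ are $D$-parallel. With that, I expect no genuine obstacle. If the complexified spaces are intended, I would run the same argument with $\beta=*_g\overline\alpha$ to get $\int_M|\alpha|^2dV_g$, or simply note that nondegeneracy of a real bilinear pairing persists after complexification.
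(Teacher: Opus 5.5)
Your proposal is correct and matches the paper's proof: equal dimensions come from the $D$-parallel Hodge star, injectivity comes from testing against $*_g\alpha$ to get $\int_M|\alpha|_g^2\,dV_g$, and the isomorphism follows from the dimension count. Your extra remarks on the second slot and on the complexified spaces are also correct.
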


\begin{proof}
The $D$-parallelness of $*_g$ gives $\dim\mathcal E_D^k=\dim\mathcal E_D^{2n-k}$. If $0\neq\alpha\in\mathcal E_D^k$, then $*_g\alpha\in\mathcal E_D^{2n-k}$ and
$$
\int_M\alpha\wedge *_g\alpha
=
\int_M|\alpha|_g^2\,dV_g>0.
$$
This proves nondegeneracy in the first factor. The same argument, applied to a nonzero element of $\mathcal E_D^{2n-k}$, gives nondegeneracy in the second factor. Since the spaces are finite-dimensional and have the same dimension, the induced map to $(\mathcal E_D^{2n-k})^*$ is an isomorphism.
\end{proof}

This pairing gives a projection onto $D$-parallel forms. The construction is the parallel analogue of the symmetrization in \cite[Proposition~A.1]{Kwong}.

\begin{proposition}\label{prop:symmetrization}
For every $k$, there is a unique linear map $\mathscr S_k:\Omega^k(M)\to\mathcal E_D^k$ characterized by
\begin{equation}\label{eq:sym-characterization}
\int_M\mathscr S_k(\alpha)\wedge\beta=\int_M\alpha\wedge\beta,\qquad \beta\in\mathcal E_D^{2n-k}.
\end{equation}
Moreover, $\mathscr S_k$ is the identity on $\mathcal E_D^k$ and $\mathscr S_{k+1}d=d\mathscr S_k$.
\end{proposition}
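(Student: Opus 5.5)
Existence and uniqueness of $\mathscr S_k$ come directly from Lemma \ref{lem:integration-pairing}. For fixed $\alpha\in\Omega^k(M)$, the map $\beta\mapsto\int_M\alpha\wedge\beta$ is a linear functional on the finite-dimensional space $\mathcal E_D^{2n-k}$. Since $\mathcal E_D^k\to(\mathcal E_D^{2n-k})^*$ is an isomorphism, there is a unique $\mathscr S_k(\alpha)\in\mathcal E_D^k$ inducing this functional, and this is exactly \eqref{eq:sym-characterization}. Linearity of $\mathscr S_k$ follows because $\alpha\mapsto\int_M\alpha\wedge\cdot$ is linear and the inverse of an isomorphism is linear. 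If $\alpha\in\mathcal E_D^k$, then $\alpha$ itself satisfies \eqref{eq:sym-characterization}, so uniqueness gives $\mathscr S_k(\alpha)=\alpha$.

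The commutation $\mathscr S_{k+1}d=d\mathscr S_k$ is the real content. By Lemma \ref{lem:d-parallel}, $d\mathscr S_k(\alpha)\in\mathcal E_D^{k+1}$, so by uniqueness it suffices to show
$$
\int_M d\mathscr S_k(\alpha)\wedge\beta=\int_M d\alpha\wedge\beta\qquad\text{for all }\beta\in\mathcal E_D^{2n-k-1}.
$$
Since $M$ is compact without boundary, Stokes' theorem gives $\int_M d\gamma\wedge\beta=(-1)^{k+1}\int_M\gamma\wedge d\beta$ for any $k$-form $\gamma$. Applying this to both sides reduces the identity to
$$
\int_M\mathscr S_k(\alpha)\wedge d\beta=\int_M\alpha\wedge d\beta .
$$
By Lemma \ref{lem:d-parallel} again, $d\beta\in\mathcal E_D^{2n-k}$, so this is precisely \eqref{eq:sym-characterization} with test form $d\beta$. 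This closes the argument.

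The only point needing care is this last step: the dual complex $\mathcal E_D^\bullet$ must be closed under $d$, which is where $DT^D=0$ enters through Lemma \ref{lem:d-parallel}. The same argument works verbatim for complex-valued forms after complexifying, since the pairing stays nondegenerate by using $\overline{*_g\alpha}$ as the test form.
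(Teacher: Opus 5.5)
Your proposal is correct and follows essentially the same route as the paper. Existence, uniqueness and linearity come from the isomorphism in Lemma \ref{lem:integration-pairing}. The identity $\mathscr S_{k+1}d=d\mathscr S_k$ comes from Stokes' theorem tested against $\mathcal E_D^{2n-k-1}$, with Lemma \ref{lem:d-parallel} used twice: to place $d\mathscr S_k(\alpha)$ in $\mathcal E_D^{k+1}$, and to make $d\beta$ an admissible test form.
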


\begin{proof}
Fix $\alpha\in\Omega^k(M)$. The map
$$
\beta\longmapsto\int_M\alpha\wedge\beta,\qquad
\beta\in\mathcal E_D^{2n-k},
$$
is an element of $(\mathcal E_D^{2n-k})^*$. By the isomorphism in Lemma \ref{lem:integration-pairing},
there exists a unique $\mathscr S_k(\alpha)\in\mathcal E_D^k$ such that
$$
\int_M\mathscr S_k(\alpha)\wedge\beta
=
\int_M\alpha\wedge\beta
$$
for every $\beta\in\mathcal E_D^{2n-k}$. The uniqueness also shows that the assignment
$\alpha\mapsto\mathscr S_k(\alpha)$ is linear. If $\alpha\in\mathcal E_D^k$, then $\alpha$ itself satisfies
\eqref{eq:sym-characterization}; hence uniqueness gives $\mathscr S_k(\alpha)=\alpha$. In particular, $\mathscr S_k^2=\mathscr S_k$.

It remains to prove compatibility with $d$. Let $\gamma\in\mathcal E_D^{2n-k-1}$. Lemma \ref{lem:d-parallel} gives $d\gamma\in\mathcal E_D^{2n-k}$. By Stokes' theorem and \eqref{eq:sym-characterization},
\begin{align*}
\int_Md\mathscr S_k(\alpha)\wedge\gamma
&=(-1)^{k+1}\int_M\mathscr S_k(\alpha)\wedge d\gamma
=(-1)^{k+1}\int_M\alpha\wedge d\gamma=\int_Md\alpha\wedge\gamma=\int_M\mathscr S_{k+1}(d\alpha)\wedge\gamma.
\end{align*}
By Lemma \ref{lem:d-parallel}, $d\mathscr S_k(\alpha)\in\mathcal E_D^{k+1}$, while $\mathscr S_{k+1}(d\alpha)\in\mathcal E_D^{k+1}$ by definition. Nondegeneracy in Lemma \ref{lem:integration-pairing} therefore gives
$$
d\mathscr S_k(\alpha)=\mathscr S_{k+1}(d\alpha).
$$
This completes the proof of the proposition. \end{proof}

We use the same notation $\mathscr S_k$ for its complex-linear extension to complex-valued $k$-forms, and write $\mathscr S=\bigoplus_k\mathscr S_k$.

The definition of $\mathscr S$ in Proposition \ref{prop:symmetrization} is particularly convenient for proving the identity $\mathscr S d=d\mathscr S$. However, the preservation of positivity is not immediate from the defining relation \eqref{eq:sym-characterization}. For this purpose, we give below a holonomy description of $\mathscr S$.

Fix a point $p\in M$. Let $\operatorname{Hol}_p(D)$ denote the full holonomy group of $D$ at $p$, namely, the group generated by the $D$-parallel transports along piecewise smooth loops based at $p$. Since $D$ is a Hermitian connection, its parallel transports preserve both $g$ and $J$, and hence $\operatorname{Hol}_p(D)\subset U(T_pM,J_p,g_p)$. We denote by
$$
\mathcal H=\overline{\operatorname{Hol}_p(D)}
$$
its closure in $U(T_pM,J_p,g_p)$. Thus $\mathcal H$ is a compact group. Let $d\mu_{\mathcal H}$ be the normalized Haar measure on $\mathcal H$, so that $\mu_{\mathcal H}(\mathcal H)=1$.

For any $x\in M$, choose a piecewise smooth curve $\gamma$ joining $p$ to $x$, and denote by
$$
P_\gamma:T_pM\longrightarrow T_xM
$$
the $D$-parallel transport along $\gamma$. If $\alpha\in\Omega^k(M)$, then $\alpha_x\in\Lambda^kT_x^*M$, and the pull-back $P_\gamma^*\alpha_x$ belongs to $\Lambda^kT_p^*M$. We define
\begin{equation}\label{eq:Reynolds}
\mathcal R_\alpha(x)
=
\int_{\mathcal H}
h^*\bigl(P_\gamma^*\alpha_x\bigr)\,
d\mu_{\mathcal H}(h).
\end{equation}
Here $h^*$ denotes the natural pull-back action of $h\in\mathcal H$ on $\Lambda^kT_p^*M$. By construction, $\mathcal R_\alpha(x)$ belongs to the invariant subspace
$$
(\Lambda^kT_p^*M)^{\mathcal H}
=
\left\{
\xi\in\Lambda^kT_p^*M:
h^*\xi=\xi\ \text{for every }h\in\mathcal H
\right\}.
$$

We first check that \eqref{eq:Reynolds} is independent of the choice of $\gamma$. Suppose that $\gamma'$ is another curve from $p$ to $x$. Then
$$
P_{\gamma'}=P_\gamma\circ a
$$
for some $a\in\operatorname{Hol}_p(D)\subset\mathcal H$. Therefore, using the invariance of the Haar measure,
\begin{align*}
\int_{\mathcal H}
h^*P_{\gamma'}^*\alpha_x\,d\mu_{\mathcal H}(h)
&=
\int_{\mathcal H}
h^*a^*P_\gamma^*\alpha_x\,d\mu_{\mathcal H}(h)=
\int_{\mathcal H}
h^*P_\gamma^*\alpha_x\,d\mu_{\mathcal H}(h).
\end{align*}
Hence $\mathcal R_\alpha(x)$ is well-defined.

We also note that the map $x\mapsto\mathcal R_\alpha(x)$ is smooth. Indeed, for any fixed $x_0\in M$, choose a curve from $p$ to $x_0$. On a sufficiently small neighborhood $U$ of $x_0$, one can choose curves from $x_0$ to $x\in U$ which depend smoothly on $x$. The corresponding parallel transports then depend smoothly on $x$.
Since the definition above is independent of the chosen paths, these local constructions agree on overlaps and give a globally defined smooth map
$$
\mathcal R_\alpha:
M\longrightarrow(\Lambda^kT_p^*M)^{\mathcal H}.
$$
We may therefore average once more over $M$. Put
\begin{equation}\label{eq:holonomy-average}
\alpha_p^{\mathrm{av}}
=
\frac{1}{\operatorname{Vol}_g(M)}
\int_M\mathcal R_\alpha(x)\,dV_g(x),
\end{equation}
where $dV_g$ denotes the volume form of $g$ and $\operatorname{Vol}_g(M)=\int_MdV_g$. Since each $\mathcal R_\alpha(x)$ is fixed by $\mathcal H$, so is $\alpha_p^{\mathrm{av}}$.

By the holonomy principle, an element of $\Lambda^kT_p^*M$ which is fixed by the full holonomy group extends uniquely by $D$-parallel transport to a global $D$-parallel $k$-form; see \cite[Chapter~II]{KN}. Since $\alpha_p^{\mathrm{av}}$ is $\mathcal H$-invariant, and hence $\operatorname{Hol}_p(D)$-invariant, it determines a unique global $D$-parallel form.  We denote this form by
$$
\widetilde{\mathscr S}_k(\alpha)\in\mathcal E_D^k.
$$
Thus the above construction defines a linear map
$$
\widetilde{\mathscr S}_k:
\Omega^k(M)\longrightarrow\mathcal E_D^k.
$$
The next proposition shows that this holonomy-average operator is precisely the symmetrization operator $\mathscr S_k$ defined in Proposition \ref{prop:symmetrization}.

\begin{proposition}\label{prop:holonomy-sym}
For every $k$, one has
$$
\widetilde{\mathscr S}_k=\mathscr S_k.
$$
Consequently, $\mathscr S$ preserves reality, bidegree, and strict positivity of positive $(1,1)$-forms and positive $(n-1,n-1)$-forms. Moreover, $\mathscr S\partial=\partial\mathscr S$ and $\mathscr S\overline\partial=\overline\partial\mathscr S$.
\end{proposition}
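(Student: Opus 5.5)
Since $\mathscr S_k$ is characterized uniquely by \eqref{eq:sym-characterization}, it suffices to show that $\widetilde{\mathscr S}_k(\alpha)$ satisfies the same identity. That is, for every $\alpha\in\Omega^k(M)$ and $\beta\in\mathcal E_D^{2n-k}$ we want
$$
\int_M\widetilde{\mathscr S}_k(\alpha)\wedge\beta=\int_M\alpha\wedge\beta .
$$

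\textbf{Step 1: reduce to a pointwise pairing at $p$.} Fix $\beta\in\mathcal E_D^{2n-k}$ and write $\beta_p\in(\Lambda^{2n-k}T_p^*M)^{\mathcal H}$. The value is $\operatorname{Hol}_p(D)$-invariant, hence $\mathcal H$-invariant by continuity. Let $\langle\xi,\zeta\rangle_p$ be the number defined by $\xi\wedge\zeta=\langle\xi,\zeta\rangle_p\,dV_g|_p$. Parallel transport is unitary and orientation preserving, and $dV_g$ is $D$-parallel, so for any $x$ and any curve $\gamma$ from $p$ to $x$,
$$
\alpha_x\wedge\beta_x=\langle P_\gamma^*\alpha_x,\,P_\gamma^*\beta_x\rangle_p\,dV_g|_x ,
$$
and $P_\gamma^*\beta_x=\beta_p$. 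Since $h\in\mathcal H$ preserves $dV_g|_p$ and fixes $\beta_p$, we also have $\langle h^*\xi,\beta_p\rangle_p=\langle\xi,\beta_p\rangle_p$. Integrating over $\mathcal H$ gives
$$
\langle\mathcal R_\alpha(x),\beta_p\rangle_p=\langle P_\gamma^*\alpha_x,\beta_p\rangle_p .
$$

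\textbf{Step 2: integrate over $M$.} By the previous step, $\int_M\alpha\wedge\beta=\int_M\langle\mathcal R_\alpha(x),\beta_p\rangle_p\,dV_g(x)$. This equals $\operatorname{Vol}_g(M)\,\langle\alpha_p^{\mathrm{av}},\beta_p\rangle_p$, because the pairing is linear and the integral in \eqref{eq:holonomy-average} takes place in a fixed vector space. On the other side, $\widetilde{\mathscr S}_k(\alpha)\wedge\beta$ is a $D$-parallel top form, so it equals the constant $\langle\alpha_p^{\mathrm{av}},\beta_p\rangle_p$ times $dV_g$. Its integral is therefore the same number, which proves $\widetilde{\mathscr S}_k=\mathscr S_k$.

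\textbf{Step 3: the consequences.}
\begin{itemize}
\item \emph{Reality} is clear from the construction.
\item \emph{Bidegree} is preserved because $P_\gamma$ and $h$ commute with $J$, so pull-back preserves $(p,q)$-types.
\item \emph{Positivity.} Let $\omega'$ be a strictly positive $(1,1)$-form, or a strictly positive $(n-1,n-1)$-form. Each $P_\gamma^*\omega'_x$ and each $h^*(\cdot)$ is again strictly positive, because positivity is invariant under complex-linear isomorphisms. The set of strictly positive forms is an open convex cone. The averages over $\mathcal H$ (a probability measure) and over $M$ (normalized) are limits of convex combinations, so they stay in the closed cone. To get strict positivity, use compactness: there is $c>0$ with $P_\gamma^*\omega'_x\geq c\,\omega_p$ (resp.\ $c\,\omega_p^{n-1}$), uniformly in $x$, since the bound does not depend on the unitary identification. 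This lower bound survives averaging.
\item \emph{Commutation with $\partial,\overline\partial$.} Decompose $d\mathscr S=\mathscr S d$ into bidegree components. Since $\mathscr S$ preserves bidegree, applying this to a $(p,q)$-form and comparing the $(p+1,q)$ and $(p,q+1)$ parts gives $\mathscr S\partial=\partial\mathscr S$ and $\mathscr S\overline\partial=\overline\partial\mathscr S$.
\end{itemize}

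\textbf{Main obstacle.} The delicate point is Step 1. There we must check that the wedge pairing is intertwined correctly by parallel transport, including orientation, and that $\mathcal H$-invariance of $\beta_p$ lets $h$ be moved across the pairing. Also worth noting: the bidegree of $d\mathscr S_k(\alpha)$ is handled by the fact that $d$ of a $D$-parallel $(p,q)$-form is a sum of parallel pieces of types $(p+1,q)$ and $(p,q+1)$, plus possibly $(p+2,q-1)$ and $(p-1,q+2)$ pieces when $J$ is non-integrable. In the integrable case these extra pieces are absent, and matching components works.
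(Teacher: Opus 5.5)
Your proposal is correct and follows the paper's proof essentially step by step. You verify the defining identity of $\mathscr S_k$ for $\widetilde{\mathscr S}_k(\alpha)$ by pairing against the $\mathcal H$-invariant value $\beta_p$ and integrating over $M$, and then you read off reality, bidegree, and the commutation with $\partial,\overline\partial$ by the same bidegree comparison the paper uses. The one difference is in the positivity step: your uniform bound $P_\gamma^*\omega'_x\geq c\,\omega_p$ is a slightly more explicit version of the paper's convexity remark. The paper's remark already suffices on its own, since a strictly positive integrand averaged against a probability measure stays strictly positive.
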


\begin{proof}
Let $\beta\in\mathcal E_D^{2n-k}$. Since $\beta$ is $D$-parallel,
$P_\gamma^*\beta_x=\beta_p$, and $\beta_p$ is fixed by every
$h\in\mathcal H$. Write
$$
\alpha\wedge\beta=f\,dV_g
$$
for some smooth function $f$ on $M$. Since $P_\gamma^*\beta_x=\beta_p$ and $h^*\beta_p=\beta_p$, while both $P_\gamma$ and $h$ preserve the metric and the orientation, we have
$$
\begin{aligned}
h^*(P_\gamma^*\alpha_x)\wedge\beta_p
&=h^*\bigl(P_\gamma^*\alpha_x\wedge P_\gamma^*\beta_x\bigr)=h^*P_\gamma^*(\alpha_x\wedge\beta_x)
=f(x)dV_{g,p}.
\end{aligned}
$$
Averaging this identity over $\mathcal H$ gives
$$
\mathcal R_\alpha(x)\wedge\beta_p=f(x)dV_{g,p}.
$$
It follows from \eqref{eq:holonomy-average} that
$$
\alpha_p^{\mathrm{av}}\wedge\beta_p
=
\frac{1}{\operatorname{Vol}_g(M)}
\left(\int_M f\,dV_g\right)dV_{g,p}.
$$
Both $\widetilde{\mathscr S}_k(\alpha)$ and $\beta$ are
$D$-parallel, so their wedge product is a $D$-parallel form. Since $DdV_g=0$, it is a constant multiple of $dV_g$, and the
constant is determined by its value at $p$. Therefore
$$
\int_M\widetilde{\mathscr S}_k(\alpha)\wedge\beta
=
\int_M f\,dV_g
=
\int_M\alpha\wedge\beta.
$$
By the uniqueness in Proposition \ref{prop:symmetrization},
$\widetilde{\mathscr S}_k(\alpha)=\mathscr S_k(\alpha)$.

Since $D$ is Hermitian, parallel transport and the holonomy action
are complex linear and unitary. Hence their pull-backs preserve real
forms and the $(p,q)$ decomposition, and the same is true for
$\mathscr S$. Combining this with $\mathscr Sd=d\mathscr S$ and
comparing bidegrees implies
$\mathscr S\partial=\partial\mathscr S$ and
$\mathscr S\overline\partial=\overline\partial\mathscr S$.

For positivity, let $\Omega$ be a strictly positive $(q,q)$-form, where $q=1$ or $q=n-1$. Since $P_\gamma$ and $h$ are complex-linear isometries, their pull-backs preserve the cone of strictly positive $(q,q)$-forms. This cone is convex, and averaging with respect to a probability measure preserves strict positivity. Hence $\mathcal R_\Omega(x)$ is strictly positive for every $x\in M$, and averaging once more over $M$ shows that $\Omega_p^{\mathrm{av}}$ is strictly positive. Since $D$-parallel transport preserves positivity, $\widetilde{\mathscr S}_{2q}(\Omega)$ is strictly positive. Therefore, by $\widetilde{\mathscr S}_{2q}=\mathscr S_{2q}$, the form $\mathscr S\Omega$ is strictly positive.
\end{proof}

We can now symmetrize the two metrics in Conjecture \ref{conj:FV}.

\begin{proposition}\label{prop:sym-metrics}
Suppose that $M$ admits a balanced Hermitian metric and a pluriclosed Hermitian metric. Then there exist a $D$-parallel balanced metric $\omega_B$ and a $D$-parallel pluriclosed metric $\omega_P$.
\end{proposition}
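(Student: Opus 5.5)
Let $\omega_0$ be the given balanced metric and $\omega_1$ the given pluriclosed metric. The plan is to apply the symmetrization operator $\mathscr S$ of Proposition \ref{prop:symmetrization} to suitable forms and then read off the conclusions from Proposition \ref{prop:holonomy-sym}.

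\emph{The pluriclosed metric.} Set $\omega_P=\mathscr S_2(\omega_1)$. By Proposition \ref{prop:holonomy-sym}, $\omega_P$ is a real $(1,1)$-form and is strictly positive. It lies in $\mathcal E_D^2$, so it is $D$-parallel. Since $\mathscr S$ commutes with $\partial$ and $\overline\partial$, we get
$$
\partial\overline\partial\omega_P=\mathscr S(\partial\overline\partial\omega_1)=0 .
$$

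\emph{The balanced metric.} Here symmetrizing $\omega_0$ itself does not work, because $\mathscr S$ is linear and so does not commute with taking the $(n-1)$-st power. Instead we symmetrize the closed form $\Omega=\omega_0^{n-1}$. The form $\Omega$ is a strictly positive $(n-1,n-1)$-form with $d\Omega=0$. Put $\Psi=\mathscr S_{2n-2}(\Omega)$. By Proposition \ref{prop:holonomy-sym}, $\Psi$ is a real, $D$-parallel, strictly positive $(n-1,n-1)$-form, and
$$
d\Psi=\mathscr S(d\Omega)=0 .
$$
By Michelsohn's pointwise lemma, every strictly positive $(n-1,n-1)$-form has a unique $(n-1)$-st root. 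That is, there is a unique strictly positive $(1,1)$-form $\omega_B$ with $\omega_B^{n-1}=\Psi$. Then $d\omega_B^{n-1}=0$, so $\omega_B$ is balanced.

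\emph{Parallelism of the root.} It remains to show $D\omega_B=0$. The root map $\Psi_x\mapsto(\omega_B)_x$ is canonical at each point. It is therefore equivariant under any complex-linear map, and in particular under unitary ones. $D$-parallel transport $P_\gamma$ is unitary and carries $\Psi_p$ to $\Psi_x$, so it carries the unique root $(\omega_B)_p$ to the unique root $(\omega_B)_x$. Hence $\omega_B$ is obtained by parallel transport from its value at $p$, which gives $D\omega_B=0$. Equivalently, $(\omega_B)_p$ is $\operatorname{Hol}_p(D)$-invariant by uniqueness of the root, so it extends to a global $D$-parallel form by the holonomy principle. Since the root depends smoothly on $\Psi$, the form $\omega_B$ is smooth.

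\emph{Main obstacle.} The key step is this root extraction. The other steps are formal consequences of Proposition \ref{prop:holonomy-sym}. What must be checked here is that the root operation is invariant under the unitary group, so that parallelism survives. Uniqueness in Michelsohn's lemma gives exactly this.
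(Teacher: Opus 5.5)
Your proposal is correct and follows essentially the same route as the paper. The paper likewise sets $\omega_P=\mathscr S(\widetilde\omega)$, symmetrizes $\omega_0^{n-1}$, takes its unique positive $(n-1)$-st root via Michelsohn, and gets $D\omega_B=0$ because parallel transport preserves $\mathscr S(\omega_0^{n-1})$ and the positive cone, so uniqueness of the root forces the root to be preserved too.
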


\begin{proof}
Let $\omega_0$ be the fundamental form of a balanced Hermitian metric. Then $\omega_0^{n-1}$ is a strictly positive $(n-1,n-1)$-form and $d\omega_0^{n-1}=0$. Propositions \ref{prop:symmetrization} and \ref{prop:holonomy-sym} show that $\mathscr S(\omega_0^{n-1})$ is $D$-parallel, strictly positive, and $d$-closed.

We now use the standard root construction for positive $(n-1,n-1)$-forms. Michelsohn \cite{Michelsohn} showed that every strictly positive $(n-1,n-1)$-form has a unique strictly positive $(1,1)$-form as its $(n-1)$-st root. Pointwise, this can also be seen from the cofactor map on positive Hermitian matrices: if $G$ is the matrix of a positive $(1,1)$-form, then the matrix associated with its $(n-1)$-st power is, up to the fixed normalization of wedge powers, $(\det G)G^{-1}$. Hence the positive root is unique and depends smoothly on the form. Let $\omega_B$ be characterized by
$$
\omega_B^{n-1}=\mathscr S(\omega_0^{n-1}).
$$
Parallel transport by $D$ preserves the right-hand side and the positive cone. Uniqueness of the positive root therefore gives $D\omega_B=0$, while $d\omega_B^{n-1}=d\mathscr S(\omega_0^{n-1})=0.$ Thus $\omega_B$ is balanced.

Let $\widetilde\omega$ be the fundamental form of a pluriclosed Hermitian metric and set $\omega_P=\mathscr S(\widetilde\omega)$. Proposition \ref{prop:holonomy-sym} gives $\omega_P>0$ and $D\omega_P=0$, and
$$
\partial\overline\partial\omega_P
=
\mathscr S(\partial\overline\partial\widetilde\omega)=0.
$$
Hence $\omega_P$ is pluriclosed.
\end{proof}

\section{The parallel Aeppli slice and the volume functional}

Throughout this section, we keep the notation and assumptions of Section \ref{SYM}. In particular, $D$ is Hermitian and $DT^D=0$. We now work in the finite-dimensional space of parallel real $(1,1)$-forms
$$
\mathcal E_D^{1,1}(\mathbb R)
=
\{\alpha\in\Omega^{1,1}(M):\alpha\text{ is real and }D\alpha=0\}.
$$
Let $\omega_B$ and $\omega_P$ be the metrics constructed in Proposition \ref{prop:sym-metrics}. Motivated by the invariant Aeppli directions and the finite-dimensional variational argument in \cite{Kwong}, we define the space of $D$-parallel Aeppli directions by
\begin{equation}\label{eq:A-space}
\mathcal A_D=\{(d\beta)^{1,1}:\beta\in\mathcal E_D^1\ \text{is real}\}.
\end{equation}
By Lemma \ref{lem:d-parallel} and the fact that $D$ preserves bidegree, $\mathcal A_D\subset\mathcal E_D^{1,1}(\mathbb R)$.

\begin{lemma}\label{lem:A-ddbar}
Every $\alpha\in\mathcal A_D$ is $\partial\overline\partial$-closed. Consequently, every positive definite form in $\omega_P+\mathcal A_D$ is pluriclosed.
\end{lemma}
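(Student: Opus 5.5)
Let $\alpha=(d\beta)^{1,1}$ with $\beta\in\mathcal E_D^1$ real. I will show that $\partial\overline\partial\alpha=0$ by a direct bidegree computation; no parallelism is actually needed for this step. The parallel structure only guarantees, through Lemma~\ref{lem:d-parallel}, that $\alpha$ lies in $\mathcal E_D^{1,1}(\mathbb R)$, which is used elsewhere.

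Since $\beta$ is real, decompose $\beta=\beta^{1,0}+\beta^{0,1}$ with $\beta^{0,1}=\overline{\beta^{1,0}}$. On a complex manifold $d=\partial+\overline\partial$. Therefore
$$
\alpha=(d\beta)^{1,1}=\overline\partial\beta^{1,0}+\partial\beta^{0,1}.
$$
Applying $\partial\overline\partial$ to each term gives
$$
\partial\overline\partial\,\overline\partial\beta^{1,0}=0
\qquad\text{and}\qquad
\partial\overline\partial\partial\beta^{0,1}=-\partial\partial\overline\partial\beta^{0,1}=0,
$$
using $\overline\partial^2=\partial^2=0$ and $\partial\overline\partial=-\overline\partial\partial$. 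Hence $\partial\overline\partial\alpha=0$.

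For the consequence, suppose $\omega_P+\alpha$ is positive definite for some $\alpha\in\mathcal A_D$. Then it is the fundamental form of a Hermitian metric. By Proposition~\ref{prop:sym-metrics}, $\omega_P$ is pluriclosed, so
$$
\partial\overline\partial(\omega_P+\alpha)=\partial\overline\partial\omega_P+\partial\overline\partial\alpha=0.
$$
Thus this metric is pluriclosed.

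The argument is entirely routine. The only point requiring care is the integrability of $J$: $d=\partial+\overline\partial$ holds because we are on a complex manifold, and the paper's "almost complex" wording for $D$ does not affect this.
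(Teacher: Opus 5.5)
Your proof is correct and matches the paper's argument: you write $(d\beta)^{1,1}=\overline\partial\beta^{1,0}+\partial\beta^{0,1}$, kill each term using $\partial^2=\overline\partial^{\,2}=0$ and $\partial\overline\partial=-\overline\partial\partial$, and then add $\partial\overline\partial\omega_P=0$. You are also right that $D$-parallelism plays no role in this particular step.
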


\begin{proof}
Let $\alpha=(d\beta)^{1,1}\in\mathcal A_D$, where
$\beta\in\mathcal E_D^1$ is real. Writing
$\beta=\beta^{1,0}+\beta^{0,1}$, we have
$$
(d\beta)^{1,1}=\overline\partial\beta^{1,0}+\partial\beta^{0,1}.
$$
Hence, using $\partial^2=\overline\partial^{\,2}=0$ and
$\partial\overline\partial=-\overline\partial\partial$, one gets
$$
\partial\overline\partial\alpha
=
\partial\overline\partial
\big(\overline\partial\beta^{1,0}+\partial\beta^{0,1}\big)=0.
$$
Since $\partial\overline\partial\omega_P=0$, every positive form in
$\omega_P+\mathcal A_D$ is pluriclosed.
\end{proof}

Let
$$
\mathscr P
=
\{\omega\in\omega_P+\mathcal A_D:\omega>0\}.
$$
Thus $\mathscr P$ is nonempty because $\omega_P\in\mathscr P$. On the finite-dimensional space of parallel $(1,1)$-forms, consider the volume functional
$$
\mathcal V(\omega)=\frac1{n!}\int_M\omega^n.
$$
For a $D$-parallel real $(1,1)$-form $\omega$ and $\alpha\in\mathcal E_D^{1,1}(\mathbb R)$, write
$$
(d\mathcal V)_\omega(\alpha)
=
\left.\frac{d}{dt}\right|_{t=0}\mathcal V(\omega+t\alpha).
$$

\begin{proposition}\label{prop:critical-balanced}
Let $\omega$ be a positive definite $D$-parallel real $(1,1)$-form. Then $\omega$ is balanced if and only if $(d\mathcal V)_\omega(\alpha)=0$ for every $\alpha\in\mathcal A_D$.
\end{proposition}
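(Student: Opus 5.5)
The plan is to compute the first variation directly and then use Stokes' theorem to turn the vanishing of the derivative into a statement about $d\omega^{n-1}$. For $\alpha\in\mathcal E_D^{1,1}(\mathbb R)$ one has
$$
(d\mathcal V)_\omega(\alpha)=\frac{1}{(n-1)!}\int_M\alpha\wedge\omega^{n-1}.
$$
Take $\alpha=(d\beta)^{1,1}$ with $\beta\in\mathcal E_D^1$ real. Since $\omega^{n-1}$ has bidegree $(n-1,n-1)$, only the $(1,1)$-part of $d\beta$ contributes, so
$$
\int_M\alpha\wedge\omega^{n-1}=\int_M d\beta\wedge\omega^{n-1}=\int_M\beta\wedge d\omega^{n-1},
$$
by Stokes' theorem (the sign is irrelevant here). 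Hence $(d\mathcal V)_\omega$ vanishes on $\mathcal A_D$ if and only if $\int_M\beta\wedge d\omega^{n-1}=0$ for every real $\beta\in\mathcal E_D^1$.

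If $\omega$ is balanced, this integral is clearly zero. The main step is the converse. Since $\omega$ is $D$-parallel, Lemma \ref{lem:d-parallel} gives that $d\omega^{n-1}\in\mathcal E_D^{2n-1}$, and it is real. By Lemma \ref{lem:integration-pairing}, the pairing $\mathcal E_D^1\times\mathcal E_D^{2n-1}\to\mathbb R$ on real forms is nondegenerate. Vanishing against every real $\beta\in\mathcal E_D^1$ therefore forces $d\omega^{n-1}=0$, so $\omega$ is balanced.

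For a concrete witness, choose $\beta=*_g\,d\omega^{n-1}$ up to sign. This form lies in $\mathcal E_D^1$ because $*_g$ is $D$-parallel, and the pairing then gives $\pm\|d\omega^{n-1}\|^2_{L^2}$. The only point that needs checking is that $\beta$ ranges over all real parallel $1$-forms, not just some subclass. This holds by the very definition of $\mathcal A_D$, so I expect no real obstacle beyond the bidegree bookkeeping in the first display.
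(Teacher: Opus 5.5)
Your proposal is correct and follows essentially the same route as the paper: the first-variation formula, the reduction $\int_M(d\beta)^{1,1}\wedge\omega^{n-1}=\int_M\beta\wedge d\omega^{n-1}$ by bidegree and Stokes, and then Lemma \ref{lem:d-parallel} together with the nondegenerate pairing of Lemma \ref{lem:integration-pairing} for the converse. Your explicit witness $\beta=*_g\,d\omega^{n-1}$ is just the argument that proves that nondegeneracy lemma, written out directly, so nothing is missing.
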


\begin{proof}
Take $\alpha=(d\beta)^{1,1}\in\mathcal A_D$. We have
\begin{align*}
(d\mathcal V)_\omega(\alpha)
&=\frac1{(n-1)!}\int_M(d\beta)^{1,1}\wedge\omega^{n-1}
=\frac1{(n-1)!}\int_M\beta\wedge d\omega^{n-1}.
\end{align*}
If $\omega$ is balanced, the last integral is zero for all $\beta$, so $\omega$ is critical along $\mathcal A_D$.

Conversely, suppose $(d\mathcal V)_\omega(\alpha)=0$ for every $\alpha\in\mathcal A_D$. Since $\omega$ is $D$-parallel, so is $\omega^{n-1}$, and Lemma \ref{lem:d-parallel} gives
$$
d\omega^{n-1}\in\mathcal E_D^{2n-1}.
$$
The formula above gives
$$
\int_M\beta\wedge d\omega^{n-1}=0
$$
for every real $D$-parallel $1$-form $\beta$. By Lemma \ref{lem:integration-pairing}, the pairing between $\mathcal E_D^1$ and $\mathcal E_D^{2n-1}$ is nondegenerate, hence $d\omega^{n-1}=0$ and $\omega$ is balanced.
\end{proof}

We next apply Proposition \ref{prop:critical-balanced} to the balanced form $\omega_B$.

\begin{lemma}\label{lem:traceless}
For every $\alpha\in\mathcal A_D$, one has $\operatorname{tr}_{\omega_B}\alpha=0$ at every point of $M$.
\end{lemma}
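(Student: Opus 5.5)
The plan is to combine Proposition \ref{prop:critical-balanced} with the observation that both $\alpha$ and $\omega_B$ are $D$-parallel. This makes the pointwise trace a constant function, and that constant is fixed by the first variation of the volume.

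First, I would show that $\operatorname{tr}_{\omega_B}\alpha$ is constant on $M$. Both $\alpha$ and $\omega_B$ are $D$-parallel, so $\alpha\wedge\omega_B^{n-1}$ is a $D$-parallel real $2n$-form. Since $D\,dV_g=0$ (Section \ref{PRE}), it is a constant multiple of $dV_g$. Similarly, $\omega_B^n$ is $D$-parallel and nowhere zero, so $\omega_B^n=c\,dV_g$ for some constant $c>0$. Then the pointwise identity
$$
n\,\alpha\wedge\omega_B^{n-1}=(\operatorname{tr}_{\omega_B}\alpha)\,\omega_B^n
$$
expresses $\operatorname{tr}_{\omega_B}\alpha$ as the ratio of two parallel top forms, the second of which never vanishes. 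Hence $\operatorname{tr}_{\omega_B}\alpha$ is a constant $\lambda$. As an alternative check, $\operatorname{tr}_{\omega_B}\alpha$ is a full contraction of parallel tensors, so its differential vanishes, and $M$ is connected.

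Second, I would use the balanced condition. Since $\omega_B$ is positive, $D$-parallel and balanced, Proposition \ref{prop:critical-balanced} gives $(d\mathcal V)_{\omega_B}(\alpha)=0$ for all $\alpha\in\mathcal A_D$. More directly, the computation in the proof of that proposition gives
$$
(d\mathcal V)_{\omega_B}(\alpha)=\frac1{(n-1)!}\int_M\alpha\wedge\omega_B^{n-1}=\frac{\lambda}{n!}\int_M\omega_B^n.
$$
The left-hand side vanishes, and $\int_M\omega_B^n>0$, so $\lambda=0$. This is exactly the claim.

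I do not expect a genuine obstacle. The only point needing care is the constancy of the trace, which rests on the $D$-parallelism of $\alpha$ (true since $\mathcal A_D\subset\mathcal E_D^{1,1}(\mathbb R)$), of $\omega_B$ (Proposition \ref{prop:sym-metrics}), and on connectedness of $M$. The remaining step only requires keeping the normalization constants in $\alpha\wedge\omega^{n-1}$ versus $\operatorname{tr}_\omega\alpha\,\omega^n$ consistent, and any positive normalization gives the same conclusion.
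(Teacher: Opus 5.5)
Your proposal is correct and follows essentially the same route as the paper. The paper likewise combines Proposition \ref{prop:critical-balanced} with the trace identity $\alpha\wedge\omega_B^{n-1}=\frac1n\operatorname{tr}_{\omega_B}(\alpha)\,\omega_B^n$ to get $\int_M\operatorname{tr}_{\omega_B}(\alpha)\,\omega_B^n=0$, and obtains constancy of the trace from $D$-parallelism of the contraction on the connected $M$, as in your alternative check; your ratio-of-parallel-top-forms argument is an equally valid variant of that step.
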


\begin{proof}
Since $\omega_B$ is balanced, Proposition \ref{prop:critical-balanced} gives $(d\mathcal V)_{\omega_B}(\alpha)=0$. By the standard trace identity for $(1,1)$-forms,
$$
\alpha\wedge\omega_B^{n-1}
=\frac1n\operatorname{tr}_{\omega_B}(\alpha)\,\omega_B^n.
$$
Consequently,
$$
0=\int_M\alpha\wedge\omega_B^{n-1}
=\frac1n\int_M\operatorname{tr}_{\omega_B}(\alpha)\,\omega_B^n.
$$
It remains to pass from the vanishing of the integral to a pointwise
statement. Since both $\alpha$ and $\omega_B$ are $D$-parallel, their contraction $\operatorname{tr}_{\omega_B}\alpha$ is $D$-parallel. Hence $D(\operatorname{tr}_{\omega_B}\alpha)=0$. Thus $\operatorname{tr}_{\omega_B}\alpha$ is a constant on the connected manifold $M$. Since $\int_M\omega_B^n>0$, the previous integral identity forces this constant to be zero.
\end{proof}

Since both $\omega_B$ and $\omega_P$ are $D$-parallel, the function $\operatorname{tr}_{\omega_B}\omega_P$ is constant on the connected manifold $M$. It follows from Lemma \ref{lem:traceless} that for every $\alpha \in \mathcal A_D$, $\omega=\omega_P+\alpha\in\mathscr P$ has the same $\omega_B$-trace, namely
\begin{equation}\label{eq:fixed-trace}
\operatorname{tr}_{\omega_B}\omega=\operatorname{tr}_{\omega_B}\omega_P=:C>0.
\end{equation}
For any $D$-parallel real $(1,1)$-form $\omega$, let $A_\omega$ be the $\omega_B$-Hermitian endomorphism determined by $\omega(X,\overline Y)=\omega_B(A_\omega X,\overline Y)$ for $X,Y\in T^{1,0}M$. Since both forms are $D$-parallel, $DA_\omega=0$. Thus the eigenvalues, the trace, and the determinant of $A_\omega$ are constant on the connected manifold. Pointwise linear algebra gives
\begin{equation}\label{eq:volume-det}
\omega^n=\det(A_\omega)\omega_B^n,\qquad \mathcal V(\omega)=\det(A_\omega)\mathcal V(\omega_B).
\end{equation}

\begin{lemma}\label{lem:compact-slice}
The set $\mathscr P$ is bounded in the affine space $\omega_P+\mathcal A_D$. Its relative closure is
$$
\overline{\mathscr P}
=
\{\omega\in\omega_P+\mathcal A_D:\omega\geq0\},
$$
and is compact.
\end{lemma}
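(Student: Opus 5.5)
The argument rests on the fixed-trace identity \eqref{eq:fixed-trace}. Every $\omega\in\omega_P+\mathcal A_D$ is $D$-parallel, so its $\omega_B$-Hermitian endomorphism $A_\omega$ has constant eigenvalues $\lambda_1,\dots,\lambda_n$. By Lemma \ref{lem:traceless} and \eqref{eq:fixed-trace} they satisfy
$$
\sum_{i=1}^n\lambda_i=\operatorname{tr}_{\omega_B}\omega=C.
$$

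\textbf{Boundedness.} Take $\omega\in\mathscr P$. All $\lambda_i$ are positive and sum to $C$, so $0<\lambda_i<C$. Hence, pointwise (and hence at the base point $p$),
$$
|\omega|_{\omega_B}^2=\sum_i\lambda_i^2\leq C^2 .
$$
Since $\omega$ is $D$-parallel and $\omega_B$ is $D$-parallel, $|\omega|_{\omega_B}$ is constant on $M$. So every element of $\mathscr P$ lies in the ball of radius $C$ in the finite-dimensional space $\mathcal E_D^{1,1}(\mathbb R)$, where we use the norm $\xi\mapsto|\xi_p|_{\omega_B}$. This is a genuine norm because a parallel form is determined by its value at $p$. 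Intersecting with the affine subspace $\omega_P+\mathcal A_D$ shows that $\mathscr P$ is bounded there.

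\textbf{Identification of the closure.} Write $K=\{\omega\in\omega_P+\mathcal A_D:\omega\geq0\}$. The same eigenvalue bound, now with $\lambda_i\ge0$, shows that $K$ is bounded. It is closed because semipositivity at $p$ is a closed condition, and for parallel forms it is equivalent to global semipositivity. Therefore $K$ is compact, being closed and bounded in a finite-dimensional affine space.

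It remains to show $\overline{\mathscr P}=K$.
\begin{itemize}
\item[] $\overline{\mathscr P}\subseteq K$: this follows because $K$ is closed and contains $\mathscr P$.
\item[] $K\subseteq\overline{\mathscr P}$: this follows by convexity. Given $\omega\in K$, the forms $(1-t)\omega+t\omega_P$ with $t\in(0,1]$ lie in the affine space $\omega_P+\mathcal A_D$. They are strictly positive, being a sum of a semipositive form and a positive one, so they lie in $\mathscr P$. As $t\to0$ they converge to $\omega$.
\end{itemize}

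The only point needing care is to translate pointwise positivity and norms into a norm on the finite-dimensional space. Parallelism handles this: every quantity is constant, so everything can be evaluated at the single point $p$. I expect no serious obstacle beyond that.
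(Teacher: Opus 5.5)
Your proposal is correct and follows essentially the paper's argument: the fixed $\omega_B$-trace bounds the eigenvalues of $A_\omega$, evaluation at $p$ turns this into boundedness in the finite-dimensional space, and the segment $(1-t)\omega+t\omega_P$ gives the reverse inclusion. The differences are cosmetic. You bound $\sum_i\lambda_i^2$ rather than the operator norm, and you prove directly that $\{\omega\in\omega_P+\mathcal A_D:\omega\geq0\}$ is closed and bounded, instead of deducing compactness from the boundedness of $\mathscr P$.
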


\begin{proof}
Fix $p\in M$. Evaluation at $p$ is injective on the space of $D$-parallel forms, so we identify such forms with their values at $p$. For $\omega\in\mathscr P$, the endomorphism $A_\omega$ is positive definite and Hermitian. Let $\lambda_1,\ldots,\lambda_n>0$ be its eigenvalues. From \eqref{eq:fixed-trace},
$$
\lambda_1+\cdots+\lambda_n
=
\operatorname{tr}(A_\omega)
=
C.
$$
Thus $0<\lambda_i\leq C$ for all $i$, and the operator norms of $A_\omega$ are uniformly bounded. Since the map $\omega\mapsto A_\omega$ is linear after evaluation at $p$, $\mathscr P$ is bounded in the finite-dimensional affine space $\omega_P+\mathcal A_D$.

Any limit of positive definite Hermitian forms is semipositive, so
$$
\overline{\mathscr P}
\subseteq
\{\omega\in\omega_P+\mathcal A_D:\omega\geq0\}.
$$
Conversely, let $\omega=\omega_P+\alpha$ in the right-hand side. For $0<t\leq1$, set
$$
\omega_t=(1-t)\omega+t\omega_P
=
\omega_P+(1-t)\alpha.
$$
Because $\omega$ is semipositive and $\omega_P$ is positive definite, $\omega_t$ is positive definite and belongs to $\mathscr P$. Since $\omega_t\to\omega$ as $t\to0$, the reverse inclusion follows. The relative closure is therefore closed and bounded in a finite-dimensional affine space, and hence compact.
\end{proof}

The compactness established above allows us to maximize the volume functional on $\overline{\mathscr P}$.

\begin{proposition}\label{prop:common-metric}
The positive slice $\mathscr P$ contains a $D$-parallel Hermitian metric $\omega_*$ which is simultaneously balanced and pluriclosed.
\end{proposition}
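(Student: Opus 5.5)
We maximize the volume functional $\mathcal V$ on the compact set $\overline{\mathscr P}$ supplied by Lemma \ref{lem:compact-slice}, and take $\omega_*$ to be a maximizer. Because $\mathcal V$ is continuous on the finite-dimensional affine space $\omega_P+\mathcal A_D$, a maximum $\omega_*\in\overline{\mathscr P}$ exists. The argument then has two steps: show that the maximizer is an interior point, and then use the first-variation characterization of balancedness.

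\emph{The maximizer lies in $\mathscr P$.} By \eqref{eq:volume-det} we have $\mathcal V(\omega)=\det(A_\omega)\,\mathcal V(\omega_B)$. For $\omega\in\overline{\mathscr P}$ the endomorphism $A_\omega$ is semipositive. If $\omega\in\overline{\mathscr P}\setminus\mathscr P$, then $A_\omega$ is degenerate at one point. Since $A_\omega$ is $D$-parallel, it is then degenerate everywhere, so $\det A_\omega=0$ and $\mathcal V(\omega)=0$. On the other hand, $\mathcal V(\omega_P)>0$ because $\omega_P\in\mathscr P$. Hence the maximum value is positive, and the maximizer $\omega_*$ lies in $\mathscr P$. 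Moreover, $\mathscr P$ is open in the affine space $\omega_P+\mathcal A_D$, since positivity of a parallel form is an open condition at one point. So $\omega_*$ is an interior maximum.

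\emph{The maximizer is balanced and pluriclosed.} At an interior maximum the first variation vanishes: for every $\alpha\in\mathcal A_D$,
\[
(d\mathcal V)_{\omega_*}(\alpha)=\left.\tfrac{d}{dt}\right|_{t=0}\mathcal V(\omega_*+t\alpha)=0,
\]
because $\omega_*+t\alpha\in\mathscr P$ for small $|t|$. The form $\omega_*$ is a positive definite $D$-parallel real $(1,1)$-form, so Proposition \ref{prop:critical-balanced} shows that $\omega_*$ is balanced. Lemma \ref{lem:A-ddbar} shows that every element of $\mathscr P$ is pluriclosed, so $\omega_*$ is pluriclosed as well. Finally, $\omega_*$ is $D$-parallel because $\omega_P$ and every element of $\mathcal A_D$ are. (Lemma \ref{lem:balanced-pluriclosed-kahler} then shows that $\omega_*$ is Kähler, though this is not required for the statement itself.)

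The step that needs the most care is excluding the boundary. Here it is not enough that the boundary is merely "degenerate somewhere": one needs that $D$-parallelism makes $\det A_\omega$ constant, so that boundary points have exactly zero volume. The constancy of $\det A_\omega$ was already noted before \eqref{eq:volume-det}, so this step goes through. Everything else is a direct application of Lemma \ref{lem:compact-slice}, Proposition \ref{prop:critical-balanced} and Lemma \ref{lem:A-ddbar}.
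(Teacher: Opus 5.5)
Your proposal is correct and follows the paper's proof essentially step for step. You maximize $\mathcal V$ on the compact set $\overline{\mathscr P}$ and rule out boundary maximizers because $\det A_\omega=0$ there, which gives $\mathcal V=0<\mathcal V(\omega_P)$. At the interior maximizer the first variation along $\mathcal A_D$ vanishes, so Proposition~\ref{prop:critical-balanced} and Lemma~\ref{lem:A-ddbar} show it is balanced and pluriclosed.
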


\begin{proof}
By Lemma \ref{lem:compact-slice}, $\overline{\mathscr P}$ is compact, so the continuous function $\mathcal V$ attains a maximum there. If $\omega$ lies on the relative boundary of $\mathscr P$, then $\omega$ is semipositive but not positive definite. Hence $A_\omega$ is semipositive and singular, so $\det(A_\omega)=0$. By \eqref{eq:volume-det}, $\mathcal V(\omega)=0$. On the other hand, $\omega_P\in\mathscr P$ and $\mathcal V(\omega_P)>0$. Therefore every maximum point of $\mathcal V$ on $\overline{\mathscr P}$ lies in the relative interior $\mathscr P$.

Choose a maximizer $\omega_*\in\mathscr P$. For every $\alpha\in\mathcal A_D$, positivity is preserved along $\omega_*+t\alpha$ for sufficiently small $|t|$, and hence
$$
0
=
\left.\frac{d}{dt}\right|_{t=0}\mathcal V(\omega_*+t\alpha)
=
(d\mathcal V)_{\omega_*}(\alpha).
$$
Proposition \ref{prop:critical-balanced} therefore shows that $\omega_*$ is balanced. By Lemma \ref{lem:A-ddbar}, every metric in $\mathscr P$ is pluriclosed, and hence $\partial\overline\partial\omega_*=0$. Finally, $\omega_*\in\omega_P+\mathcal A_D\subset\mathcal E_D^{1,1}(\mathbb R)$, so $D\omega_*=0$.
\end{proof}

\section{Proofs of the main results}

\begin{proof}[Proof of Theorem \ref{thm:parallel-main}]
If $n=1$, the given Hermitian metric $g$ is K\"ahler and is already $D$-parallel. We therefore assume $n\geq2$. Proposition \ref{prop:sym-metrics} produces $D$-parallel metrics $\omega_B$ and $\omega_P$ which are balanced and pluriclosed, respectively. Proposition \ref{prop:common-metric} then gives a $D$-parallel metric $\omega_*$ which is both balanced and pluriclosed. Lemma \ref{lem:balanced-pluriclosed-kahler} implies $d\omega_*=0$. Thus $\omega_*$ is a $D$-parallel K\"ahler metric.
\end{proof}

\begin{proof}[Proof of Corollary \ref{thm:main}]
Let $(M^n,g)$ be a compact BTP manifold. Taking $D=\nabla^b$ in Theorem \ref{thm:parallel-main} shows that whenever $M$ admits both a balanced Hermitian metric and a pluriclosed Hermitian metric, it admits a K\"ahler metric. This proves the first assertion.

Assume in addition that the BTP metric $g$ is non-balanced. If the underlying complex manifold admitted both a balanced metric and a pluriclosed metric, Theorem \ref{thm:parallel-main} would again produce a K\"ahler metric. This contradicts Lemma \ref{lem:ZZ-obstruction}. Hence the two types of metrics cannot coexist.
\end{proof}

\vspace{0.3cm}

\noindent\textbf{Generative AI disclosure.}
During the development of this work, the authors used ChatGPT
5.6 Sol to assist with exploratory computations, possible proof directions, and editorial polishing.
The authors checked the mathematical proofs and computations, and take full responsibility for
the contents of the paper.

\vspace{0.3cm}

\noindent\textbf{Declaration on competing interests.}
All authors declare that there are no competing interests for this paper.

\end{document}